\newtheorem{definition}{Definition}
\newtheorem{theorem}[definition]{Theorem}
\newtheorem{lemma}[definition]{Lemma}
\newtheorem{corollary}[definition]{Corollary}
\newtheorem{remark}[definition]{Remark}
\newtheorem{proposition}[definition]{Proposition}
\newtheorem{example}[definition]{Example}
\newcommand{\R}{\mathbb{R}}
\newcommand{\C}{\mathbb{C}}
\newcommand{\N}{\mathbb{N}}
\newcommand{\X}{\mathcal{X}}
\newcommand{\U}{\mathcal{U}}
\newcommand{\ep}{\varepsilon}
\newcommand{\fnorm}[1]{\left\|#1\right\|_{\mathrm{F}}}
\def \bigo {\mathcal{O}}
\title{Optimal value functions for weakly coupled systems: a posteriori estimates}
\author{P\'eter Koltai and Oliver Junge\thanks{M3, Faculty for Mathematics, Technische Universit\"at M\"unchen, Boltzmannstr.\ 3, 85748 Garching. E-mails: koltai{@}ma.tum.de, junge@ma.tum.de}}
\date{September 17, 2012}
\begin{document}

\maketitle                   

\begin{abstract}
We consider weakly coupled LQ optimal control problems and derive estimates on the sensitivity of the optimal value function in dependence of the coupling strength. In order to improve these sensitivity estimates a ``coupling adapted'' norm is proposed. Our main result is that if a weak coupling suffices to destabilize the closed loop system with the optimal feedback of the uncoupled system then the value function might change drastically with the coupling.  As a consequence, it is not reasonable to expect that a weakly coupled system possesses a weakly coupled optimal value function. Also, for a known result on the connection of the separation operator and the stability radius a new and simpler proof is given.
\end{abstract}

\section{Introduction}

When designing feedback controllers for high dimensional control systems using dynamic programming, the dimension of state space imposes natural limitations for existing methods:  In general, the representation of the optimal value function suffers from the curse of dimensionality.  An efficient approximation is only possible if the optimal value function possesses some regularity that we can exploit.  Our aim here is to investigate such regularity properties for systems which can be decomposed into a number of small subsystems which -- in a proper sense -- are only weakly coupled to each other.  This scenario is, e.g., motivated by so called \emph{networked control systems}.

More specifically, we consider a system consisting of several weakly coupled subsystems such that an optimal feedback of each subsystem, i.e.\ without the coupling to the other systems, is easily computable (for example, if it is an LQ system or if its state space dimension is $\leq 3$).  These feedbacks together form an optimal feedback for the uncoupled system, the \textit{naive feedback}.  Not surprisingly, the naive feedback may fail to stabilize the coupled system for particular coupling structures even if the coupling is weak.  In this paper, we aim to understand why and how the naive feedback fails, and how the optimal value function changes with increasing coupling strength.  

In particular, we consider the time-continuous linear quadratic regulator (LQR) problem, for which the optimal value function is a quadratic function of the form $V(x) = x^TPx$ with a symmetric positive definite matrix $P$.  In Section~\ref{sec:wclqr_cont} we recall some existing theory on the sensitivity analysis of the LQR problem. Since the norm in which the sensitivity is measured yields unsatisfactory estimates for the coupling of the optimal value function, in Section~\ref{sec:other_norm} the estimates are established in a coupling-adapted norm.  Similar estimates are derived in Section 5 for discrete time systems. Section~\ref{sec:disc_estim} contains a discussion about the obtained estimates.  Finally, we turn to the question on what these estimates tell us about the structure of the optimal value function if we can introduce a weak coupling such that the naive feedback fails to stabilize the coupled system. The answer is somewhat surprising: if a coupling (no matter how small) destabilizes the system controlled by the naive feedback, the optimal value function for the coupled system differs massively from the one of the uncoupled system; cf.~Section~\ref{sec:optimal value functiondestab}.

\section{The optimal value function, uncoupled and weakly coupled systems}	\label{sec:optimal value functionnoncoup}

We consider a control system
\begin{equation}\label{eq:cs}
\dot x = f(x,u),
\end{equation}
where the right hand side $f:\X\times\U\to \R^n$ is assumed to be $C^1$ and the state space $\X\subset\R^n$, $0\in\X$, and the set of controls $\U\subset\R^m$, $0\in\U$, are compact. The origin of the uncontrolled system is a (possibly unstable) equilibrium, i.e.\ we have that $f(0,0)=0$.  We are interested in constructing a feedback $u:\X\to \U$ such that the origin of the closed loop system $\dot x = f(x,u(x))$ is asymptotically stable.  To this end, we consider a continuous cost function $c:\X\times\U\to [0,\infty)$ which fulfills the condition $c(x,u)=0$ iff $x=0$.  Given an initial value $x_0\in\X$ and a (measurable) control function $u:[0,\infty)\to\X$, the cost along the associated trajectory $x(t;x_0,u)$ of (\ref{eq:cs}) is
\[
J(x_0,u)=\int_0^\infty c(x(t;x_0,u),u(t))\; dt,
\]
while the \emph{optimal value function} is
\[
V(x) = \inf\{J(x,u)\mid u:[0,\infty)\to\X \text{ measurable}\}.
\]
The optimal value function is the solution of the \emph{Hamilton-Jacobi-Bellman equation} (see~\cite{Sont98} Chapter~8)
\[
\inf_{u\in\U}\left\{\nabla V(x)\cdot f(x,u) + c(x,u)\right\}=0,
\]
with the boundary condition $V(0)=0$.  From this equation, an optimal feedback can be constructed by choosing a minimizing $u$ to a given $x$ (assuming that it exists).
 
We call a control system (\ref{eq:cs}) with cost function $c$ \emph{uncoupled} if one can decompose the state space and the set of controls into at least two subsystems, $\X=\bigotimes_{i=1}^k\X_i$ and $\U=\bigotimes_{i=1}^k\U_i$, $k\ge 2$, such that $f$ and $c$ can be written as 
\[
f(x,u) = \left[\begin{array}{c}f_1(x_1,u_1)\\\vdots\\f_k(x_k,u_k)\end{array}\right]
\] 
and $c(x,u) = \sum_{i=1}^k c_i(x_i,u_i)$, where $f_i:\X_i\times \U_i\to\R^{n_i}$, $c_i:\X_i\times\U_i\to [0,\infty)$ and $n_1+\ldots+n_k=n$.
Of course, for an uncoupled system, the optimal value function is simply the sum of the optimal value functions of the subsystems:
\[
V(x) = \sum_{i=1}^k V_i(x_i).
\]
Evidently, if $V$ is $C^2$, then $\partial_{i,j} V\equiv 0$, but also the converse is true (for the proof, see the appendix):
\begin{proposition}\label{prop:optimal value function_uncoupled}
For $V\in C^2(\X)$ the following statements are equivalent.
\begin{enumerate}[a)]
	\item There are $V_i\in C^2(\X_i)$, $i=1,\ldots,k$, such that $V(x) = \sum_{i=1}^k V_i(x_i)$.
	\item It holds that $\partial_{i,j} V\equiv 0$ for all $i,j\in\{1,\ldots,k\}$, $i\neq j$.
\end{enumerate}
\end{proposition}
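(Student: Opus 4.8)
The implication (a)\,$\Rightarrow$\,(b) is immediate: if $V(x)=\sum_{\ell=1}^k V_\ell(x_\ell)$, then for a fixed block $j$ only the summand $V_j$ contributes to the first partials in the directions belonging to $x_j$, so $\partial_j V$ depends on $x_j$ alone; differentiating once more in a direction belonging to a block $i\neq j$ therefore yields $0$, i.e.\ $\partial_{i,j}V\equiv 0$.

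For the converse, which is the substantive part, the plan is to read off the candidate summands from the restrictions of $V$ to the coordinate ``axes'' through the origin and then to verify the decomposition by the fundamental theorem of calculus. Put
\[
V_i(y):=V(0,\dots,0,\,y,\,0,\dots,0),\qquad y\in\X_i,
\]
with $y$ in the $i$-th slot and $0$ elsewhere; then $V_i\in C^2(\X_i)$ and $V_i(0)=V(0)$ for every $i$. The key observation is that the hypothesis $\partial_{i,j}V\equiv 0$ for all $j\neq i$ says that the $\R^{n_i}$-valued map $\partial_i V$ has vanishing derivative in every block $x_j$ with $j\neq i$; since $\X$ is a product of connected sets, $\partial_i V$ depends on $x_i$ only, and evaluating on the $i$-th axis gives $\partial_i V(x)=\nabla V_i(x_i)$ for all $x\in\X$. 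Integrating $V$ along the segment from $0$ to $x$ (which stays in $\X$ when the factors $\X_i$ are convex, and in general one uses any path in $\X$) then yields
\[
V(x)-V(0)=\int_0^1\nabla V(tx)\cdot x\,dt=\sum_{i=1}^k\int_0^1\nabla V_i(tx_i)\cdot x_i\,dt=\sum_{i=1}^k\bigl(V_i(x_i)-V_i(0)\bigr),
\]
where the middle equality uses the previous observation and the last one applies the one-dimensional fundamental theorem of calculus to $t\mapsto V_i(tx_i)$. Hence $V(x)=\sum_{i=1}^k V_i(x_i)-(k-1)V(0)$, and absorbing the constant $-(k-1)V(0)$ into, say, $V_1$ gives statement (a).

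The part I expect to require the most care is not computational but topological: the step ``$\partial_j g\equiv 0$ for all $j\neq i$ $\Rightarrow$ $g$ depends on $x_i$ only'' needs the slices $\{x_i=\text{const}\}\cap\X$ to be connected, and the integral representation needs a path in $\X$ joining $0$ to $x$; both hold as soon as the factors $\X_i$ are connected, which is the relevant setting and is in fact necessary for the statement to be true. If one prefers to avoid integration, the same conclusion follows by induction on $k$: grouping $x=(x_1,(x_2,\dots,x_k))$ and checking that $R(x):=V(x)-V(x_1,0,\dots,0)-V(0,x_2,\dots,x_k)+V(0)$ has identically vanishing gradient (using $\partial_{1,j}V\equiv 0$ and $\partial_{j,1}V\equiv 0$ for $j\ge 2$) and vanishes on the axes $\{x_1=0\}$ and $\{x_2=\dots=x_k=0\}$, hence is $\equiv 0$; this settles $k=2$ directly and reduces the claim for $k$ blocks to the claim for $k-1$ blocks applied to the $C^2$ function $(x_2,\dots,x_k)\mapsto V(0,x_2,\dots,x_k)$, whose mixed partials are restrictions of those of $V$ and hence vanish.
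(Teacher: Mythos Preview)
Your proof is correct. The paper's argument for (b)$\Rightarrow$(a) proceeds by first proving a two-block sublemma---using the fundamental theorem of calculus twice to obtain $V(x_1,x_2,z)-V(0,x_2,z)-V(x_1,0,z)+V(0,0,z)=0$---and then inducting on $k$, successively splitting off one block at a time. Your main argument takes a genuinely different route: you define all the $V_i$ at once as restrictions to the coordinate axes, observe that the mixed-partial hypothesis forces $\partial_i V$ to depend on $x_i$ alone (hence equals $\nabla V_i(x_i)$), and then recover the full decomposition from a single line integral along the segment $t\mapsto tx$. This is more direct and avoids the induction entirely; the paper's approach, on the other hand, only ever integrates in one coordinate block at a time and so is marginally more elementary. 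The alternative you sketch at the end---showing that $R(x)=V(x)-V(x_1,0,\dots,0)-V(0,x_2,\dots,x_k)+V(0)$ vanishes and then inducting---is essentially the paper's proof. You are also more explicit than the paper about the topological prerequisites (connectedness of the $\X_i$, or a path from $0$ to $x$ in $\X$), which both arguments need but the paper leaves implicit.
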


Accordingly, a control system  $f$ is called \emph{weakly coupled} if again there is a  decomposition of $\X$ and $\U$ as above such that for all $i,j\in\{1,\ldots,k\}$, $i\neq j$, 
\[
\| \partial_i f_i(x,u) \| \gg \| \partial_j f_i(x,u) \|
\]
uniformly in $x$ and $u$, for a given norm $\|\cdot\|$ (on the $\X_i$).  We  define the \emph{coupling constant} as
\[
\ep_f := \max_{i\neq j} \sup_{x,u} \frac{\|\partial_j f_i(x,u) \|}{\| \partial_i f_i(x,u) \|}.
\]
Correspondingly, a system is weakly coupled if $\ep_f \ll 1$. Note that this notion is rather vague --- in the sequel we will make statements about the asymptotic case $\ep_f\to 0$.

In light of Proposition~\ref{prop:optimal value function_uncoupled} a natural question is whether for weakly coupled systems the mixed derivatives $\partial_{i,j}V$ for the optimal value function are uniformly small.

\section{Weakly coupled LQ systems with linear feedback --- continuous time}	\label{sec:wclqr_cont}

We now consider more specifically linear time invariant systems with quadratic cost function. A weakly coupled linear system is defined by
\begin{equation}\label{eq:LQ}
\dot x = Ax+Bu,
\end{equation}
where $A$ is a weakly coupled matrix, i.e.\ the diagonal blocks dominate the other ones (an implication of the definition above), and $B$ is a block diagonal matrix (i.e.\ the subsystems are controlled separately). The accumulated cost along a trajectory $x(t)=x(t;x_0,u(\cdot))$ is given by
\[
J(x_0,u(\cdot)) = \int_0^{\infty} x(t)^TQx(t) + u(t)^TRu(t)\, dt,
\]
where $Q$ and $R$ are symmetric positive definite (spd) matrices, both block diagonal to satisfy $c(x,u)=\sum_{i=1}^kc_i(x_i,u_i)$. The optimal value function is then the infimum over all possible control functions. Imposing no restrictions on $u$, the unique optimal control is realized by the feedback
\[
u(x) = -R^{-1}B^TP x,
\]
where $P$ is the unique spd solution of the Riccati equation
\begin{equation}
P BR^{-1}B^TP-P A-A^TP-Q = 0,
\label{eq:ric}
\end{equation}
and the optimal value function is given by $V(x)=x^TP x$; see~\cite{Sont98}, chapter 8.4.

Hence, the question whether the $\partial_{i,j} V$ are small, reduces to the question whether the  off-diagonal blocks of $P$ are small compared to the diagonal ones. Our answer will be of asymptotic nature, i.e.\ by considering small perturbations of an initially block diagonal (uncoupled) system matrix $A$.
Note, that for a block diagonal system matrix and an analogously block diagonal ansatz $P$ equation~\eqref{eq:ric} simplifies to $k$ uncoupled Riccati equations. Then, by uniqueness of the solution is $P$ block diagonal as well.

In order to analyze the effect of perturbations on $A$, we define the separation of two matrices; see e.g.~\cite{Vara79}.
\begin{definition}[Separation of matrices]
The separation of two square matrices $X$ and $Y$, $\mathrm{sep}(X,Y)$, is defined as the smallest singular value of $I\otimes X-Y^T\otimes I$, where $I$ is the identity and $\otimes$ denotes the Kronecker product.\footnote{It holds that $\mathrm{sep}(X,Y)=\mathrm{sep}(Y,X)$, and $\mathrm{sep}(X^T,Y^T)=\mathrm{sep}(X,Y)$.}
\end{definition}
The following estimate plays a central role in our considerations. Although more general results exist~\cite{KoPeCh86}, a proof of this one is also given, because we use the ideas in it again later.
\begin{theorem}	\label{thm:ric_cond}
Let $P(A)$ denote the solution of~\eqref{eq:ric} for fixed $B$, $Q$ and $R$, in dependence of $A$. If $P(A+\delta A) = P(A)+\delta P$, and $A_{cl}=A-BR^{-1}B^TP(A)$ denotes the closed-loop system matrix, it holds that
\begin{equation}
\fnorm{\delta P} \le \frac{2\fnorm{P(A)}}{\mathrm{sep}(A_{cl},-A_{cl}^T)}\fnorm{\delta A} + \bigo\left(\fnorm{\delta A}^2\right).
\label{eq:estim_lin}
\end{equation}
\end{theorem}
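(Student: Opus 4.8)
The plan is to subtract the two Riccati equations, isolate a Sylvester (Lyapunov-type) equation for $\delta P$ carrying a purely quadratic remainder, and then bound the inverse of the associated Lyapunov operator by the reciprocal of the separation. Writing $S:=BR^{-1}B^T$ for brevity, I would substitute $P+\delta P$ and $A+\delta A$ into~\eqref{eq:ric} and subtract~\eqref{eq:ric} itself. The terms that are first order in $P$ combine via $SP-A=-A_{cl}$ and $PS-A^T=(SP-A)^T=-A_{cl}^T$ (using $P=P^T$, $S=S^T$), which yields the \emph{exact} identity
\[
A_{cl}^T\,\delta P+\delta P\,A_{cl}=-\bigl(P\,\delta A+\delta A^TP\bigr)+\delta P\,S\,\delta P-\delta P\,\delta A-\delta A^T\delta P .
\]
The left-hand side is $\mathcal{L}(\delta P)$, where $\mathcal{L}\colon X\mapsto A_{cl}^TX+XA_{cl}$ is the Lyapunov operator of the closed loop, and the last three terms on the right are the remainder.

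The key step is to control $\mathcal{L}$ by the separation. Written in coordinates via the Kronecker product, $\mathcal{L}$ becomes multiplication by $I\otimes A_{cl}^T+A_{cl}^T\otimes I=I\otimes A_{cl}^T-(-A_{cl})^T\otimes I$, whose smallest singular value is, by definition, $\mathrm{sep}(A_{cl}^T,-A_{cl})=\mathrm{sep}(A_{cl},-A_{cl}^T)$, the last equality being one of the symmetry properties noted in the footnote. Since the LQR closed loop $A_{cl}$ is Hurwitz (a standard fact about the LQR optimal feedback), its spectrum is disjoint from that of $-A_{cl}^T$, so $\mathrm{sep}(A_{cl},-A_{cl}^T)>0$; hence $\mathcal{L}$ is invertible with $\fnorm{\mathcal{L}^{-1}(Y)}\le\fnorm{Y}/\mathrm{sep}(A_{cl},-A_{cl}^T)$ for every $Y$. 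Applying $\mathcal{L}^{-1}$ to the displayed identity, together with $\fnorm{P\,\delta A+\delta A^TP}\le 2\fnorm{P}\fnorm{\delta A}$ (submultiplicativity of the Frobenius norm and $\fnorm{\delta A^T}=\fnorm{\delta A}$), gives
\[
\fnorm{\delta P}\le\frac{2\fnorm{P}\fnorm{\delta A}}{\mathrm{sep}(A_{cl},-A_{cl}^T)}+\frac{1}{\mathrm{sep}(A_{cl},-A_{cl}^T)}\bigl(\fnorm{\delta P\,S\,\delta P}+2\fnorm{\delta P}\fnorm{\delta A}\bigr).
\]

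What remains is to see that the remainder is genuinely $\bigo(\fnorm{\delta A}^2)$, and this I expect to be the main obstacle, since a priori the last display only bounds $\fnorm{\delta P}$ in terms of itself. The fix is to first establish the Lipschitz bound $\fnorm{\delta P}=\bigo(\fnorm{\delta A})$: this follows either from the implicit function theorem applied to the real-analytic map $(P,A)\mapsto PSP-PA-A^TP-Q$, whose partial derivative in $P$ at the solution is $-\mathcal{L}$ and hence invertible (so $P(\cdot)$ is analytic, in particular $C^2$, near $A$), or directly by bootstrapping the last display — for $\fnorm{\delta A}$ small enough the term $2\fnorm{\delta P}\fnorm{\delta A}/\mathrm{sep}(A_{cl},-A_{cl}^T)$ is absorbed into the left-hand side. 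Once $\fnorm{\delta P}=\bigo(\fnorm{\delta A})$ is known, the three remainder matrices $\delta P\,S\,\delta P$, $\delta P\,\delta A$, $\delta A^T\delta P$ are each $\bigo(\fnorm{\delta A}^2)$, and substituting this back into the last display produces~\eqref{eq:estim_lin}. One should also record at the outset that $P(A+\delta A)$ still exists and is spd for $\fnorm{\delta A}$ small (continuity of the Riccati solution in $A$), and that the Hurwitz property of $A_{cl}$ is precisely what keeps $\mathrm{sep}(A_{cl},-A_{cl}^T)$ positive, so the estimate is not vacuous.
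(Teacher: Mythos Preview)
Your proof is correct and follows essentially the same route as the paper: both identify the Lyapunov operator $\mathcal{L}(X)=A_{cl}^TX+XA_{cl}$ as the linearization of the Riccati map in $P$, bound $\mathcal{L}^{-1}$ via $1/\mathrm{sep}(A_{cl},-A_{cl}^T)$ through the Kronecker representation, and estimate the forcing term by $2\fnorm{P}\fnorm{\delta A}$. The only cosmetic difference is that the paper packages the linearization via the implicit function theorem and then invokes a Taylor expansion for the $\bigo(\fnorm{\delta A}^2)$ remainder, whereas you subtract the two Riccati equations directly to obtain the exact Sylvester identity with explicit quadratic remainder terms; your subsequent appeal to analyticity (or the bootstrap together with continuity of $P(\cdot)$) to justify $\fnorm{\delta P}=\bigo(\fnorm{\delta A})$ is exactly what the paper's Taylor argument encodes.
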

\begin{proof}
Define the function ${g(A,P)=PBR^{-1}B^TP-PA-A^TP-Q}$, where $P$ is assumed to be a symmetric matrix. By definition, ${g(A,P(A))=0}$ for all $A$. We have that $P(\cdot)$ is real-analytic in an appropriate neighborhood of $A$~\cite{Del83}. It also holds
\begin{eqnarray*}
\partial_A\, g(A,P)\cdot X & = & -PX-X^TP \\
\partial_P\, g(A,P)\cdot X & = & XBR^{-1}B^TP+PBR^{-1}B^TX-XA-A^TX.
\end{eqnarray*}
From the implicit function theorem we have
\[
DP(A) = -\partial_P\, g\left(A,P\left(A\right)\right)^{-1}\partial_A\, g(A,P(A)),
\]
if $\partial_P\, g\left(A,P\left(A\right)\right)$ is invertible. Following~\cite{Vara79}, one can see that this is the case if and only if $\mathrm{sep}(A_{cl},-A_{cl}^T)\neq0$ (in fact, $\partial_Pg(A,P(A))$ is a linear operator with matrix representation $-I\otimes A_{cl}-A_{cl}\otimes I$ if $X$ is represented as a long vector). Moreover, it holds that
\begin{equation}
\fnorm{DP(A)} \le \frac{2\fnorm{P(A)}}{\mathrm{sep}(A_{cl},-A_{cl}^T)}.
\label{eq:estimF}
\end{equation}
A Taylor expansion of $P$ in $A$ yields the claim.
\end{proof}

An immediate consequence of Theorem~\ref{thm:ric_cond} for our situation is that if we are given a weakly coupled linear system (\ref{eq:LQ}) and we can decompose $A=A_0+\delta A$ such that ${\fnorm{\delta A}\ll \fnorm{A_0}}$, and $\fnorm{DP(A_0)}$ is small, then the optimal value function is ``weakly coupled'', i.e.\ its mixed second derivatives are uniformly small.
\begin{remark}\quad
\begin{enumerate}[a)]
	\item Note that~\eqref{eq:estimF} is an a posteriori estimate, i.e.\ it does not characterize the sensitivity directly in terms of $A$, $B$, $Q$ and $R$, but involves the solution $P$ of the (uncoupled) Riccati equation.
	\item In~\cite{KoPeCh86} (see also~\cite{KGMP03}) the authors present a non-local perturbation analysis of the Riccati equation~\eqref{eq:ric}. This could be used to yield better bounds for the estimates of the mixed second derivatives of the optimal value function. With the present result we have no control about the quality of our estimate, since we have no information about the magnitude of higher order terms.\\
Their result states that $\fnorm{\delta P} \le g\left(\fnorm{\delta A}\right)$ for $\fnorm{\delta A}\in[0,a^*)$, with
\[
g(a) = \frac{1}{2d}\left(s-2a- \left(\left(s-2a\right)^2-8pda\right)^{1/2}\right),
\]
where $s = \mathrm{sep}\left(A_{cl},-A_{cl}^T\right)$, $d=\fnorm{BR^{-1}B^T}$ and $p=\fnorm{P(A)}$. Here, the right boundary point $a^*$ is defined as the (smaller) positive root of ${2a+2\sqrt{2dpa}-s=0}$.
\end{enumerate}
\end{remark}

\section{Coupling-adapted estimates}	\label{sec:other_norm}

\newcommand{\cnorm}[1]{\left\|#1\right\|_{C}}

A somewhat unsatisfactory property of the Frobenius norm $\fnorm{\cdot}$ is that $\fnorm{\delta A}$ is not closely related to the coupling constant of the system. Much more adequate for measuring the coupling of a system is the following norm, which focuses on the subsystems. Each matrix $X\in\R^{n\times n}$, with $n=\sum_{i=1}^kn_i$, can be partitioned blockwise, according to the subsystems: $X = (X_{ij})_{i,j=1}^k$, where $X_{i,j}\in\R^{n_i\times n_j}$. Define
\[
\cnorm{X} = \max_{i,j} \fnorm{X_{i,j}}.
\]
Note that this norm is \textit{not} sub-multiplicative, but it holds that
\[
\cnorm{AB} = \max_{i,j}\Big\|\sum_{\ell=1}^k A_{i,\ell}B_{\ell,j}\Big\|_F \le k\cnorm{A}\cnorm{B}.
\]
Adapting the proof of Theorem \ref{thm:ric_cond} to an uncoupled system, we obtain the following sensitivity estimate:
\begin{lemma}	\label{lem:estimC}
Consider the situation of Theorem \ref{thm:ric_cond} and suppose that the system is uncoupled. Then
\begin{equation}
\fnorm{(DP(A)\cdot \delta A)_{i,j}} \le \frac{\fnorm{P(A)_{i,i}}+\fnorm{P(A)_{j,j}}}{\mathrm{sep}((A_{cl})_{j,j},-(A_{cl})_{i,i}^T)}\max\big\{\fnorm{\delta A_{i,j}}, \fnorm{\delta A_{j,i}}\big\}.
\label{eq:estimC}
\end{equation}
\end{lemma}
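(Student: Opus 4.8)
The plan is to rerun the proof of Theorem~\ref{thm:ric_cond} while keeping track of the block structure. Recall from that proof that, with $g(A,P)=PBR^{-1}B^TP-PA-A^TP-Q$ and $P=P(A)$, the implicit function theorem identifies $Y:=DP(A)\cdot\delta A$ as the unique solution of $\partial_P g(A,P)\cdot Y=-\partial_A g(A,P)\cdot\delta A$. Inserting $\partial_A g(A,P)\cdot\delta A=-P\delta A-\delta A^TP$ and $\partial_P g(A,P)\cdot Y=-(A_{cl}^TY+YA_{cl})$, this becomes the Sylvester equation
\[
A_{cl}^TY+YA_{cl}=-(P\delta A+\delta A^TP).
\]
Since the system is uncoupled, $A$, $B$, $Q$ and $R$ are block diagonal, hence so is $P=P(A)$ (as noted after~\eqref{eq:ric}), and therefore so is $A_{cl}=A-BR^{-1}B^TP$.

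The next step is to read off the $(i,j)$-block of this equation. Because $A_{cl}$ and $P$ are block diagonal one has $(A_{cl}^TY)_{i,j}=(A_{cl})_{i,i}^TY_{i,j}$, $(YA_{cl})_{i,j}=Y_{i,j}(A_{cl})_{j,j}$, $(P\delta A)_{i,j}=P_{i,i}\delta A_{i,j}$ and $(\delta A^TP)_{i,j}=\delta A_{j,i}^TP_{j,j}$, so that
\[
(A_{cl})_{i,i}^TY_{i,j}+Y_{i,j}(A_{cl})_{j,j}=-(P_{i,i}\delta A_{i,j}+\delta A_{j,i}^TP_{j,j}).
\]
Thus the full Sylvester operator is itself block diagonal with respect to the block partition of $Y$, and each $Y_{i,j}\in\R^{n_i\times n_j}$ is determined by its own $n_i\times n_j$ Sylvester equation involving only $(A_{cl})_{i,i}$, $(A_{cl})_{j,j}$, $P_{i,i}$ and $P_{j,j}$. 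Vectorizing, the operator $Z\mapsto(A_{cl})_{i,i}^TZ+Z(A_{cl})_{j,j}$ is represented by $I\otimes(A_{cl})_{i,i}^T+(A_{cl})_{j,j}^T\otimes I$, whose smallest singular value equals $\mathrm{sep}((A_{cl})_{j,j},-(A_{cl})_{i,i}^T)$ by the definition of $\mathrm{sep}$ and the Kronecker-product identities recorded in its footnote.

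To conclude I would invert this per-block operator, take Frobenius norms, and bound $\fnorm{P_{i,i}\delta A_{i,j}+\delta A_{j,i}^TP_{j,j}}\le\fnorm{P_{i,i}}\fnorm{\delta A_{i,j}}+\fnorm{P_{j,j}}\fnorm{\delta A_{j,i}}\le(\fnorm{P_{i,i}}+\fnorm{P_{j,j}})\max\{\fnorm{\delta A_{i,j}},\fnorm{\delta A_{j,i}}\}$, using submultiplicativity of $\fnorm{\cdot}$ and the triangle inequality; this is exactly~\eqref{eq:estimC}.

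The argument is essentially bookkeeping once one observes that block diagonality of $A_{cl}$ turns the Sylvester operator into a block-diagonal one, so the points needing care are matching signs and transposes when identifying the per-block smallest singular value with the stated $\mathrm{sep}$, and checking that each block operator is invertible, i.e.\ that $\mathrm{sep}((A_{cl})_{j,j},-(A_{cl})_{i,i}^T)>0$. The latter holds because $A_{cl}$, being the LQR closed-loop matrix, is Hurwitz, so no eigenvalue of $(A_{cl})_{i,i}$ can equal the negative of an eigenvalue of $(A_{cl})_{j,j}$. (The same observation yields $\mathrm{sep}(A_{cl},-A_{cl}^T)=\min_{i,j}\mathrm{sep}((A_{cl})_{j,j},-(A_{cl})_{i,i}^T)$, one of the reasons~\eqref{eq:estimC} improves on applying~\eqref{eq:estimF} blockwise.)
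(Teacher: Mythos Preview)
Your argument is correct and is essentially the paper's own proof: you exploit block-diagonality of $A$, $P(A)$ and $A_{cl}$ to decouple the Sylvester equation $A_{cl}^TY+YA_{cl}=-(P\delta A+\delta A^TP)$ into per-block equations and then bound each block via the corresponding $\mathrm{sep}$. The only differences are that you spell out the vectorization, the Frobenius-norm bound on the right-hand side, and the Hurwitz justification for invertibility, all of which the paper leaves implicit by pointing back to the proof of Theorem~\ref{thm:ric_cond}.
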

Note that this block-wise sensitivity estimate directly implies an analogous estimate in the norm~$\cnorm{\cdot}$.
\begin{proof}
Note, that the system is uncoupled, hence $A$, $P(A)$ and $A_{cl}$ are all block-diagonal. It holds that block-wise 
\[
(\partial_A\, g(A,P)\cdot\delta A)_{i,j} = -P_{i,i}\delta A_{i,j} - \delta A_{j,i}^TP_{j,j}.
\]
Recall that $\partial_P\, g(A,P)\cdot X = -XA_{cl}-A_{cl}^TX =: -S$ for symmetric matrices $X$. It holds that block-wise 
\[
X_{i,j}(A_{cl})_{j,j} +  (A_{cl})_{i,i}^TX_{i,j} = S_{i,j} = -P_{i,i}\delta A_{i,j} - \delta A_{j,i}^TP_{j,j}.
\]
Solving this for $X_{i,j}$ implies the claim analogously to the proof of Theorem~\ref{thm:ric_cond}.
\end{proof}

The previous result also gives us a possibility to measure the effect on the coupling constant of the closed-loop system.
\begin{corollary}
In the situation of Lemma~\ref{lem:estimC} the first order perturbation $\delta A_{cl}$ of the closed loop system matrix induced by the perturbation $\delta A$ of the system matrix $A$ satisfies
\begin{align}
\fnorm{\delta (A_{cl})_{i,j}} & \quad\dot{\leq}\quad 
\left(1+\fnorm{B_{i,i}R_{i,i}^{-1}B_{i,i}^T}\frac{\fnorm{P(A)_{i,i}}+\fnorm{P(A)_{j,j}}}{\mathrm{sep}((A_{cl})_{j,j},-(A_{cl})_{i,i}^T)}\right) \max\big\{\fnorm{\delta A_{i,j}},\fnorm{\delta A_{j,i}}\big\}
\end{align}
(where $\quad\dot{\leq}$ means that this inequality is satisfied up to higher order terms).
\end{corollary}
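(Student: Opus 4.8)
The plan is to differentiate the defining relation $A_{cl}(A) = A - BR^{-1}B^TP(A)$ with respect to $A$ and then exploit the block-diagonal structure coming from the ``uncoupled'' hypothesis. Since $P(\cdot)$ is real-analytic in a neighborhood of $A$ (as used in the proof of Theorem~\ref{thm:ric_cond}) and $B$, $R$ do not depend on $A$, a Taylor expansion gives
\[
\delta A_{cl} = \delta A - BR^{-1}B^T\left(DP(A)\cdot\delta A\right) + \bigo\left(\fnorm{\delta A}^2\right).
\]

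Next I would pass to blocks. Because the system is uncoupled, $B$ and $R$ — and hence $BR^{-1}B^T$ — are block diagonal, with $i$-th diagonal block $B_{i,i}R_{i,i}^{-1}B_{i,i}^T$. Consequently the $(i,j)$ block of the product $BR^{-1}B^T\left(DP(A)\cdot\delta A\right)$ equals $B_{i,i}R_{i,i}^{-1}B_{i,i}^T\,\left(DP(A)\cdot\delta A\right)_{i,j}$, so
\[
(\delta A_{cl})_{i,j} = (\delta A)_{i,j} - B_{i,i}R_{i,i}^{-1}B_{i,i}^T\,\left(DP(A)\cdot\delta A\right)_{i,j} + \bigo\left(\fnorm{\delta A}^2\right).
\]
Applying the triangle inequality and submultiplicativity of $\fnorm{\cdot}$ to the surviving product yields
\[
\fnorm{(\delta A_{cl})_{i,j}} \le \fnorm{(\delta A)_{i,j}} + \fnorm{B_{i,i}R_{i,i}^{-1}B_{i,i}^T}\,\fnorm{\left(DP(A)\cdot\delta A\right)_{i,j}} + \bigo\left(\fnorm{\delta A}^2\right).
\]

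Finally I would estimate the two terms on the right. The first is bounded by $\max\{\fnorm{\delta A_{i,j}},\fnorm{\delta A_{j,i}}\}$, and the second by Lemma~\ref{lem:estimC}, which controls $\fnorm{\left(DP(A)\cdot\delta A\right)_{i,j}}$ by $\frac{\fnorm{P(A)_{i,i}}+\fnorm{P(A)_{j,j}}}{\mathrm{sep}((A_{cl})_{j,j},-(A_{cl})_{i,i}^T)}\max\{\fnorm{\delta A_{i,j}},\fnorm{\delta A_{j,i}}\}$. Factoring out the common $\max\{\fnorm{\delta A_{i,j}},\fnorm{\delta A_{j,i}}\}$ gives exactly the claimed inequality. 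There is no genuine difficulty here: the only points to watch are that the estimate is necessarily only up to the $\bigo\left(\fnorm{\delta A}^2\right)$ remainder (hence the symbol $\dot{\leq}$), and that one must actually invoke the block-diagonality of $BR^{-1}B^T$ — which is where the uncoupled hypothesis enters a second time, beyond its use inside Lemma~\ref{lem:estimC} — to be sure that no off-diagonal blocks of $BR^{-1}B^T$ mix the subsystems.
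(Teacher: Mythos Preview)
Your argument is correct and is precisely what the paper intends: its proof is the one-liner ``Use $A_{cl}=A-BR^{-1}B^TP(A)$,'' and you have spelled out the differentiation, block-diagonal reduction, and appeal to Lemma~\ref{lem:estimC} that this entails.
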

\begin{proof}
Use $A_{cl} = A-BR^{-1}B^TP(A)$.
\end{proof}

\section{Weakly coupled LQ systems with linear feedback --- discrete time}	\label{sec:wclqr_discr}

The considerations here are analogous to the ones in sections~\ref{sec:wclqr_cont} and \ref{sec:other_norm}. The discrete time control system is given by
\[
x_{m+1} = Ax_m+Bu_m, \quad m=0,1,2,\ldots.
\]
We assume $A$ to be weakly coupled and $B$ to be a block diagonal matrix. The cost accumulated along a trajectory $(x_m)_{m\in\N}=(x_m(x_0;(u_k)_{k\in\N}))_{m\in\N}$ is given by
\[
J(x_0,(u_k)_{k\in\N}) =\sum_{m=0}x_m^TQx_m + u_m^T R u_m,
\]
where $Q$ and $R$ are block diagonal spd matrices. If there are no restrictions on $u$ the optimal feedback is given by
\[
 u(x) = -\left(R+B^TP B\right)^{-1}B^TP A x,
\]
and the optimal value function is $V(x)=x^TP x$, with $P$ being the unique spd solution of the discrete Riccati equation (see \cite{Sont98}, chapter~8.4)
\begin{equation}
P = A^T\left(P-P B\left(R+B^TP B\right)^{-1}B^TP\right)A+Q.
\label{eq:DARE}
\end{equation}
In the following we derive a first order perturbation result on $P$ depending on $A$.

\newcommand{\sepd}{\mathrm{sep}^{\#}}
For arbitrary square matrices $M_1$, $M_2$ and $S$ the equation
\[
X-M_1^TXM_2 = S
\]
is solvable if the matrix $I-M_1^T\otimes M_2^T$ is nonsingular, and the absolute condition (w.r.t.\ the Frobenius norm) of the solution is given by $1/\sepd(M)$ with
\[
\sepd(M_1,M_2):= \sigma_{\min}(I-M_1^T\otimes M_2^T),
\]
where $\sigma_{\min}(\cdot)$ denotes the smallest singular value.

We show directly the block-wise sensitivity estimate.
\begin{theorem}
Let the linear discrete time system be uncoupled, and let $P(A)$ denote the solution of~\eqref{eq:DARE}, where $B$, $Q$ and $R$ are fixed; and let $A_{cl}$ denote the closed-loop matrix, i.e.\ ${A_{cl} = A-B(R+B^TPB)^{-1}B^TPA}$ . Then for some perturbation $\delta A$ of $A$ we have
\begin{align}
	\fnorm{(DP(A)\cdot\delta A)_{i,j}} &\le 
 \frac{\fnorm{P_{i,i} (A_{cl})_{i,i}}+\fnorm{(A_{cl})_{j,j}^T P_{j,j}}}{\sepd\big(\left(A_{cl}\right)_{i,i},\left(A_{cl}\right)_{j,j}\big)}\max\{\fnorm{\delta A_{i,j}},\fnorm{\delta A_{j,i}}\}.
\label{eq:estimCdiscr}
\end{align}
\end{theorem}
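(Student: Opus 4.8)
The plan is to follow the proofs of Theorem~\ref{thm:ric_cond} and Lemma~\ref{lem:estimC}, with the continuous-time Riccati map replaced by the right-hand side of~\eqref{eq:DARE}. Put
\[
h(A,P) := A^T\!\left(P-PB\left(R+B^TPB\right)^{-1}B^TP\right)\!A + Q - P
\]
on symmetric $P$, so that $h(A,P(A))=0$ and, as in the continuous case, $P(\cdot)$ is real-analytic in a neighborhood of $A$. By the implicit function theorem $DP(A) = -\left[\partial_P h(A,P(A))\right]^{-1}\partial_A h(A,P(A))$ whenever $\partial_P h$ is invertible, so the task reduces to computing the two partial derivatives and identifying the relevant separation.

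The key step is the linearization of the DARE. Writing $\tilde K := (R+B^TPB)^{-1}B^TP$, so that $A_{cl} = (I-B\tilde K)A$, one differentiates the inner bracket using $D\!\left[(R+B^TPB)^{-1}\right]\!\cdot X = -(R+B^TPB)^{-1}(B^TXB)(R+B^TPB)^{-1}$ and the symmetry of $P$; collecting the four resulting terms shows that the differential of $P\mapsto P-PB(R+B^TPB)^{-1}B^TP$ in direction $X$ equals $(I-B\tilde K)^T X (I-B\tilde K)$. Conjugating by $A$ gives
\[
\partial_P h(A,P)\cdot X = A_{cl}^T X A_{cl} - X .
\]
As a (vectorized) linear operator this is $A_{cl}^T\otimes A_{cl}^T - I$, which is invertible iff $\sepd(A_{cl},A_{cl})\neq 0$ and whose inverse has Frobenius norm $1/\sepd(A_{cl},A_{cl})$; for the block-wise estimate one localizes this to the diagonal blocks. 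For the other derivative, the identity $\left(P-PB(R+B^TPB)^{-1}B^TP\right)A = PA_{cl}$ together with the symmetry of $P$ yields
\[
\partial_A h(A,P)\cdot\delta A = \delta A^T P A_{cl} + A_{cl}^T P\,\delta A .
\]

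Setting $X := DP(A)\cdot\delta A$, the implicit-function identity becomes the Stein-type equation $X - A_{cl}^T X A_{cl} = \delta A^T P A_{cl} + A_{cl}^T P\,\delta A =: S$. Now invoke the hypothesis that the system is uncoupled: then $A$, $P=P(A)$ and $A_{cl}$ are block diagonal, so taking the $(i,j)$ block annihilates every cross term and leaves
\[
X_{i,j} - (A_{cl})_{i,i}^T X_{i,j}\,(A_{cl})_{j,j} = (A_{cl})_{i,i}^T P_{i,i}\,\delta A_{i,j} + \delta A_{j,i}^T P_{j,j}\,(A_{cl})_{j,j} .
\]
Bounding the right-hand side by $\left(\fnorm{P_{i,i}(A_{cl})_{i,i}}+\fnorm{(A_{cl})_{j,j}^T P_{j,j}}\right)\max\{\fnorm{\delta A_{i,j}},\fnorm{\delta A_{j,i}}\}$ --- using submultiplicativity and the transpose-invariance of $\fnorm{\cdot}$ --- and then solving for $X_{i,j}$ via the absolute condition $1/\sepd((A_{cl})_{i,i},(A_{cl})_{j,j})$ (w.r.t.\ $\fnorm{\cdot}$) of the map $Y\mapsto Y - (A_{cl})_{i,i}^T Y (A_{cl})_{j,j}$ gives exactly~\eqref{eq:estimCdiscr}. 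The order of the arguments of $\sepd$ is immaterial, since $\sepd(M_1,M_2)=\sepd(M_2,M_1)$ by the perfect-shuffle permutation exchanging the two Kronecker factors.

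The main obstacle is the first computation: because the discrete Riccati operator contains the inverse $(R+B^TPB)^{-1}$, $\partial_P h$ is not manifestly of Stein form, and the work lies in the algebraic rearrangement that reassembles the four differentiated terms into $(I-B\tilde K)^T X (I-B\tilde K)$, whence $\partial_P h\cdot X = A_{cl}^T X A_{cl} - X$. Once this ``completion of squares'' is in place the remainder is bookkeeping, together with the observation --- exactly as in Lemma~\ref{lem:estimC} --- that uncoupledness is what makes the block localization legitimate.
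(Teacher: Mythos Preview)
Your proof is correct and follows essentially the same approach as the paper: implicit function theorem applied to the DARE residual, identification of $\partial_P$ with the Stein operator $X\mapsto X-A_{cl}^TXA_{cl}$ and of $\partial_A$ with $\delta A\mapsto \delta A^TPA_{cl}+A_{cl}^TP\,\delta A$, followed by block-wise localization via the uncoupled (block-diagonal) structure. In fact you supply more detail than the paper does on the ``completion of squares'' that collapses the derivative of $P\mapsto P-PB(R+B^TPB)^{-1}B^TP$ to $(I-B\tilde K)^TX(I-B\tilde K)$; the paper simply states the resulting formulas for $\partial_A g^{\#}$ and $\partial_P g^{\#}$.
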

\begin{proof}
Just as in the proof of Theorem~\ref{thm:ric_cond}, we use the implicit function theorem. For this, let
\[
g^{\#}(A,P) = A^T\left(P B\left(R+B^TP B\right)^{-1}B^TP - P\right)A + P - Q.
\]
Setting ${\bar A = A-B(R+B^TPB)^{-1}B^TPA}$, we obtain
\begin{eqnarray*}
\partial_A g^{\#}(A,P)\cdot X & = & - X^TP\bar{A} - \bar{A}^TPX \\
\partial_P g^{\#}(A,P)\cdot X & = & X-\bar{A}^TX\bar{A}.
\end{eqnarray*}
Since all matrices involved are block diagonal, we have in particular the block wise equations
\begin{eqnarray*}
\left(\partial_A g^{\#}(A,P)\cdot X\right)_{i,j} & = & - X_{j,i}^TP_{j,j}\bar{A}_{j,j} - \bar{A}_{i,i}^TP_{i,i}X_{i,j} \\
\left(\partial_P g^{\#}(A,P)\cdot X\right)_{i,j} & = & X_{i,j}-\bar{A}_{i,i}^TX_{i,j}\bar{A}_{j,j}.
\end{eqnarray*}
Since $\bar A=A_{cl}$ for $P=P(A)$, considerations as in the proof of Theorem~\ref{thm:ric_cond} and Lemma~\ref{lem:estimC} imply the claim.
\end{proof}

\begin{remark}
For the discrete time case as well, in~\cite{KoPeCh86} the authors present a non-local perturbation analysis in the Frobenius norm for the solution of~\eqref{eq:DARE}.
\end{remark}

\section{Coupling estimates in dependence on the number of subsystems} 	\label{sec:disc_estim}

In this section we compare the sensitivity estimates obtained in the Frobenius and coupling-adapted norms. The exposition is restricted to the continuous-time case, but analogous results hold in discrete time as well.

The estimates~\eqref{eq:estimF} and~\eqref{eq:estimC} are easier to compare if we see, that the denominators are the same:
\begin{proposition}
For a block-diagonal matrix we have 
\[
\mathrm{sep}(A,-A^T) = \min_{i,j} \mathrm{sep}(A_{i,i},-A_{j,j}^T).
\]
\end{proposition}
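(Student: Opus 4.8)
The plan is to compute both sides directly from the definition of $\mathrm{sep}$ via the smallest singular value of a Kronecker-structured operator, exploiting the fact that $A$ is block diagonal. Recall $\mathrm{sep}(A,-A^T)=\sigma_{\min}(I\otimes A + A\otimes I)$, where I have used $-(-A^T)^T = A$ in the second Kronecker factor. Equivalently, $\mathrm{sep}(A,-A^T)$ is the smallest value of $\fnorm{AX + XA}$ over all matrices $X$ with $\fnorm{X}=1$, since the linear operator $X\mapsto AX+XA$ has matrix representation $I\otimes A + A\otimes I$ when $X$ is vectorized. I would work with this operator-norm characterization, as it makes the block decomposition transparent and avoids shuffling Kronecker indices.

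First I would observe that, since $A=\mathrm{diag}(A_{1,1},\ldots,A_{k,k})$ is block diagonal, the map $L\colon X\mapsto AX+XA$ respects the block partition of $X$: writing $X=(X_{i,j})$, one computes $(LX)_{i,j} = A_{i,i}X_{i,j} + X_{i,j}A_{j,j}$, so $L$ is block-diagonal as an operator, decomposing into the independent blocks $L_{i,j}\colon X_{i,j}\mapsto A_{i,i}X_{i,j} + X_{i,j}A_{j,j}$ acting on $\R^{n_i\times n_j}$. The smallest singular value of a block-diagonal operator is the minimum over the blocks of their smallest singular values, hence $\sigma_{\min}(L) = \min_{i,j}\sigma_{\min}(L_{i,j})$. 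It remains to identify $\sigma_{\min}(L_{i,j})$ with $\mathrm{sep}(A_{i,i},-A_{j,j}^T)$: the operator $X_{i,j}\mapsto A_{i,i}X_{i,j}+X_{i,j}A_{j,j}$ has Kronecker matrix representation $I\otimes A_{i,i} + A_{j,j}^T\otimes I = I\otimes A_{i,i} - (-A_{j,j}^T)^T\otimes I$, which is precisely the matrix whose smallest singular value defines $\mathrm{sep}(A_{i,i},-A_{j,j}^T)$. Chaining these equalities gives the claim.

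The only step requiring care — the "main obstacle," though it is modest — is the bookkeeping in the second Kronecker factor: one must be consistent about whether the operator is $X\mapsto AX+XA$ or $X\mapsto AX + XA^T$ (equivalently, whether the defining matrix is $I\otimes X - Y^T\otimes I$ with $Y=-A^T$ or $Y=-A$), and about the transpose convention in vectorization (column-stacking versus row-stacking), since these affect whether $A_{j,j}$ or $A_{j,j}^T$ appears. The footnote after the definition, namely $\mathrm{sep}(X,Y)=\mathrm{sep}(Y,X)$ and $\mathrm{sep}(X^T,Y^T)=\mathrm{sep}(X,Y)$, provides exactly the flexibility needed to absorb any such convention mismatch, so the identity $\mathrm{sep}(A_{i,i},-A_{j,j}^T)$ on the right-hand side is stated in a symmetric-enough form that the argument goes through regardless. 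I would state the vectorization convention explicitly at the outset to keep the computation unambiguous, then let the block-diagonality of $L$ and the minimum-over-blocks property of singular values do the rest.
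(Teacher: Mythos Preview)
Your approach is essentially the same as the paper's: both arguments observe that the Sylvester/Lyapunov operator underlying $\mathrm{sep}(A,-A^T)$ decouples block-wise when $A$ is block diagonal, so its smallest singular value is the minimum over the blocks, and each block gives precisely $\mathrm{sep}(A_{i,i},-A_{j,j}^T)$. The paper states this via the Lyapunov equation $A^TX+XA=Q$ and its block decoupling $A_{i,i}^TX_{i,j}+X_{i,j}A_{j,j}=Q_{i,j}$, which is the same computation you propose.

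One caveat on the bookkeeping you flagged: the specific operator you wrote down, $X\mapsto AX+XA$, is \emph{not} the one corresponding to $\mathrm{sep}(A,-A^T)$. With column-stacking vectorization, $I\otimes A + A\otimes I$ represents $X\mapsto AX+XA^T$; your operator $X\mapsto AX+XA$ has matrix $I\otimes A + A^T\otimes I$, whose smallest singular value is $\mathrm{sep}(A,-A)$, and the footnote identities $\mathrm{sep}(X,Y)=\mathrm{sep}(Y,X)$, $\mathrm{sep}(X^T,Y^T)=\mathrm{sep}(X,Y)$ do \emph{not} convert $\mathrm{sep}(A,-A)$ into $\mathrm{sep}(A,-A^T)$ for general $A$. (You are in fact inconsistent: you use one convention when passing from $I\otimes A+A\otimes I$ to the full operator and the opposite convention when writing the Kronecker matrix of the block $L_{i,j}$.) The fix is immediate: use $X\mapsto A^TX+XA$ as the paper does, or equivalently $X\mapsto AX+XA^T$; the block computation then reads $(A^TX+XA)_{i,j}=A_{i,i}^TX_{i,j}+X_{i,j}A_{j,j}$, and the rest of your argument goes through verbatim.
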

\begin{proof}
For block-diagonal $A$ the Lyapunov equation $A^TX+XA = Q$ is decoupled,
\[
A_{i,i}^TX_{i,j}+X_{i,j}A_{j,j}=Q_{i,j}.
\]
This means that the solution operator of the Lyapunov equation is block-wise decoupled, hence its singular values are the singular values of the blocks. This proves the claim.
\end{proof}

Consider now the coupling of $k$ identical subsystems, i.e.\ $A=I_{k\times k}\otimes A_0$, $B=I_{k\times k}\otimes B_0$, etc. Then $P = I_{k\times k}\otimes P_0$, where $P_0$ solves the Riccati equation with $A_0$, $B_0$, etc. In particular, the denominator in~\eqref{eq:estimF} stays constant in the number of subsystems $k$. Hence, the right hand side of the Frobenius norm estimate increases as $k^{1/2}$. Using~\eqref{eq:estimF} to obtain an estimate in $\cnorm{\cdot}$, we can use norm equivalence. Having $k$ subsystems, one verifies easily that
\[
\cnorm{A} \le \fnorm{A} \le k\cnorm{A}.
\]
Thus, we have
\[
\cnorm{\delta P} \le \fnorm{\delta P} \le \fnorm{DP(A)}\fnorm{\delta A} \le k\fnorm{DP(A)}\cnorm{\delta A}.
\]
In general, this would imply an estimate $\sim k^{3/2}$. 

However, the coupling-adapted norm estimate~\eqref{eq:estimC} shows that the first order perturbations do not depend on the number of subsystems \textit{at all}.  An important conclusion is that the coupling strength of the optimal value function does not increase with the number of subsystems.

\begin{remark}
This comparison gets important if we would try to devise an approximation technique for the OVF. The estimates suggest, that we should search for an approximation with error bounded by some analog of the the coupling-adapted norm for functions, because it does not scale with the number of subsystems. Based on our former considerations, in particular Proposition~\ref{prop:optimal value function_uncoupled}, an ideal candidate for the error indicator would be $\max_{i\neq j}\|\partial_{i,j}V\|_{\infty}$.
\end{remark}

\section{The optimal value function of destabilized systems}	\label{sec:optimal value functiondestab}

\subsection{Continuous time}

In this section we investigate the structure of the optimal value function if a coupling destabilizes the optimally controlled (linear time-continuous) system. 

The norm of the smallest perturbation $\delta A$ that destabilizes $A$, the so-called \textit{stability radius} of $A$~\cite{HiPr86}, is given by
\[
r(A) := \min_{\delta A\in\C^{n\times n}}\left\{\|\delta A\|_2\,\big\vert\, \exists z\in\sigma(A+\delta A), \Re z=0\right\}.
\]

So if $\delta A$ destabilizes $A$, then $\|\delta A\|_2\ge r(A)$.
What can be said about the estimate~\eqref{eq:estim_lin} in these cases?
\begin{proposition}	\label{prop:sep_bound_stab_radius}
It holds that
\begin{equation}
\mathop{\mathrm{sep}}(A,-A^T) \le 2 r(A).
\label{eq:estim_sep}
\end{equation}
\end{proposition}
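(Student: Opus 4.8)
The plan is to relate the quantity $\mathrm{sep}(A,-A^T)$, which is the smallest singular value of the Lyapunov operator $L(X) = XA + A^TX = I\otimes A^T + A^T\otimes I$ (acting on vectorized $X$), to the distance of $A$ to the set of matrices with a purely imaginary eigenvalue. First I would recall the well-known characterization that $\mathrm{sep}(A,-A^T)$ equals the infimum of $\fnorm{XA + A^TX}$ over symmetric $X$ with $\fnorm{X}=1$; more precisely, $\mathrm{sep}(A,-A^T) = \sigma_{\min}(I\otimes A^T + A^T \otimes I)$, and since the operator $X\mapsto XA+A^TX$ maps symmetric matrices to symmetric matrices, one should be a little careful about whether the minimizing singular vector can be taken symmetric — but in fact the spectrum of $I\otimes A^T+A^T\otimes I$ is $\{\lambda_i+\lambda_j\}$ over eigenvalues of $A$, and the corresponding structure does allow a symmetric minimizer, so this is not a real obstacle.

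The core of the argument: suppose $z = i\omega \in \sigma(A + \delta A)$ for a destabilizing perturbation $\delta A$ with $\|\delta A\|_2 = r(A)$ (such a minimizer exists since the set is closed). Let $v$ be a corresponding unit eigenvector, $(A+\delta A)v = i\omega v$. Then I would form the rank-one (or rank-two) symmetric matrix built from $v$ — the natural candidate is $X = vv^*$ (or its real/symmetric version if one insists on working over $\R$) — and compute $XA + A^TX$ using $Av = i\omega v - \delta A\, v$. The point is that $Av$ differs from a purely imaginary multiple of $v$ only by the term $-\delta A\,v$, whose norm is at most $r(A)$; the purely imaginary part cancels in the combination $XA+A^TX$ (because $i\omega vv^* + \overline{i\omega}\,vv^* = 0$), leaving only terms involving $\delta A$, each bounded by $\|\delta A\|_2 = r(A)$. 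Summing the two such terms gives $\fnorm{XA+A^TX} \le 2 r(A)\fnorm{X}$, hence $\mathrm{sep}(A,-A^T) \le 2r(A)$.

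The main technical care-point — the step I expect to need the most attention — is reconciling the complex eigenvector $v$ with the fact that $\mathrm{sep}(A,-A^T)$ is, in this paper, the smallest singular value of a real operator on real symmetric matrices; one must produce a genuine real symmetric test matrix $X$ of unit Frobenius norm achieving $\fnorm{XA+A^TX}\le 2r(A)\fnorm{X}$. This is handled by taking real and imaginary parts: writing $v = p + iq$ with $p,q\in\R^n$, one checks that the real symmetric matrix $X = pp^T + qq^T$ (possibly after normalization) works, because the identity $(A+\delta A)v = i\omega v$ splits into $Ap = -\omega q - \delta A\, p$ and $Aq = \omega p - \delta A\, q$, and substituting these into $XA+A^TX = pp^TA + A^Tpp^T + qq^TA + A^Tqq^T$ makes the $\omega$-terms cancel pairwise, leaving a sum of four outer products each of Frobenius norm at most $r(A)\,\fnorm{X}^{1/2}$-scale terms that assemble to the bound $2r(A)\fnorm{X}$. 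Once the cancellation is verified, dividing by $\fnorm{X}$ and invoking the variational characterization of $\sigma_{\min}$ finishes the proof.
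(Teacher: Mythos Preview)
Your approach is essentially correct but differs from the paper's. The paper does not build a test matrix at all; it invokes Varah's pseudospectral bound $\mathrm{sep}(A,-A^T)\le \mathrm{sep}_\lambda(A,-A^T):=\min\{\ep_1+\ep_2:\sigma_{\ep_1}(A)\cap\sigma_{\ep_2}(-A^T)\neq\emptyset\}$, restricts the intersection point to the imaginary axis, and then uses that $i\omega\in\sigma_\ep(A)$ iff $i\omega\in\sigma_\ep(-A^T)$ (because $i\omega\in\sigma(A+\delta A)$ implies $i\omega\in\sigma(-A^T-\delta A^*)$ with $\|\delta A^*\|_2=\|\delta A\|_2$) to conclude $\mathrm{sep}_\lambda\le 2r(A)$. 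Your rank-one variational argument is more self-contained---it avoids citing Varah and effectively re-derives the relevant special case of his inequality---whereas the paper's route is shorter once the pseudospectral machinery is accepted and makes the geometric picture (overlapping pseudospectra meeting on $i\R$) explicit.

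One concrete fix is needed in your execution. With $L(X)=XA+A^TX$ and $X=vv^*$ you require $v^*A$ and $A^Tv$, which the relation $(A+\delta A)v=i\omega v$ does \emph{not} provide; the same obstruction appears in your real version, since $Ap=-\omega q-\delta A\,p$ and $Aq=\omega p-\delta A\,q$ give $Ap,Aq$ but not $A^Tp,A^Tq$. The remedy is to use the transposed operator $\tilde L(X)=AX+XA^T$, which has the same singular values: then $\tilde L(vv^*)=(Av)v^*+v(Av)^*=-(\delta A\,v)v^*-v(\delta A\,v)^*$ after the $i\omega$-cancellation, and $\fnorm{\tilde L(vv^*)}\le 2\|\delta A\|_2=2r(A)$ with $\fnorm{vv^*}=1$. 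Also, your worry about producing a real symmetric witness is unnecessary: since $A$ is real, $I\otimes A+A\otimes I$ is a real matrix, and the smallest singular value of a real matrix is the same whether one minimizes $\|Mx\|/\|x\|$ over real or complex $x$; the complex $X=vv^*$ is already a legitimate test matrix.
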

This result seems to be first stated in~\cite{KeHe90} Lemma 2.7; see also the remark after Corollary~2.5 in the same paper. They refer to Theorem~2.2 in~\cite{HeKe88} for the proof. If the stability radius is measured by the Frobenius norm instead of the spectral norm, the same statement can already be found in~\cite{VanLoa85}. His result follows from the aforementioned ones by simple norm estimates. Since our short proof differs significantly from the ones used in these papers, we include it in the appendix.

Proposition~\ref{prop:sep_bound_stab_radius} together with~\eqref{eq:estim_lin}, shows that if a weak coupling (i.e.\ with a small constant $\ep >0$) destabilizes a system, we have to take a strong coupling (i.e.\ with constant of size $\mathcal{O}(1)$) of the optimal value function into account. To see this, note that for a destabilizing coupling it holds that $\|\delta A\|_2\ge r(A_{cl})$, hence we have in leading order
\begin{equation}
\frac{\|\delta P\|_F}{\|P\|_F} \le \underbrace{\frac{2\|\delta A\|_F}{\mathrm{sep}(A_{cl},-A_{cl}^T)}}_{\ge 1}.
\label{eq:strong_sensitivity_estim}
\end{equation}

But does the coupling of the OVF stay small if $r(A_{cl})$ is not small? An answer can be given using the estimate of He~\cite{He97}, Theorem~4;
\begin{equation}
\frac{\pi r(A_{cl})^2\|A_{cl}\|_2}{2\|A_{cl}\|_2^2 + \pi r(A_{cl})^2} \le \mathrm{sep}(A_{cl},-A_{cl}^T).
\label{eq:HeEstimate}
\end{equation}
Assume that $r(A_{cl}) \ge \varrho\|A_{cl}\|_2$, with some robustness constant $\varrho\in (0,1)$. Combining the estimates~\eqref{eq:estim_lin} and~\eqref{eq:HeEstimate} yields in leading order
\[
\frac{\|\delta P\|_F}{\|P\|_F} \le \frac{2\left(\frac{2}{\varrho^2}+\pi\right)\|\delta A\|_F}{\pi\|A_{cl}\|_2}.
\]
Thus, if the robustness constant $\varrho$ is of order one, we expect a small coupling to induce also only a small coupling in the OVF (assuming that ``small coupling'' means $\|\delta A\|_F \ll \|A_{cl}\|_2$ here). 
\begin{remark}
The previous bound suggests, that in case of $\varrho\ll 1$ it might be possible that a perturbation $\|\delta A\|_F\ll r(A_{cl})$ induces large changes in the OVF, since $\|\delta A\|_F/\varrho^2$ can be big. Unfortunately, we did not find any examples supporting this claim.
\end{remark}
The above bounds did not consider whether the perturbation of the system matrix comes from a coupling or not. We would like to consider this topic in the following.
\begin{corollary}
For a block diagonal matrix $A$ we have
\[
\mathrm{sep}(A,-A^T) = \min_{i,j}\mathrm{sep}(A_{i,i},-A_{j,j}^T) \le 2\min_{i} r(A_{ii}).
\]
\end{corollary}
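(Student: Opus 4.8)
The plan is to combine two facts already available in the excerpt. The equality $\mathrm{sep}(A,-A^T) = \min_{i,j}\mathrm{sep}(A_{i,i},-A_{j,j}^T)$ is exactly the proposition proved at the beginning of Section~\ref{sec:disc_estim} (for block-diagonal $A$ the Lyapunov solution operator decouples block-wise, so its singular values are those of the blocks), so I would simply invoke it and spend no further effort on that half of the statement. It then remains to establish the inequality $\min_{i,j}\mathrm{sep}(A_{i,i},-A_{j,j}^T) \le 2\min_i r(A_{ii})$.

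For the inequality I would argue in two short steps. First, restricting the minimum over all index pairs $(i,j)$ to the diagonal pairs $(i,i)$ can only make it larger, so $\min_{i,j}\mathrm{sep}(A_{i,i},-A_{j,j}^T) \le \min_i \mathrm{sep}(A_{i,i},-A_{i,i}^T)$. Second, apply Proposition~\ref{prop:sep_bound_stab_radius} to each diagonal block $A_{ii}$ on its own, which gives $\mathrm{sep}(A_{i,i},-A_{i,i}^T) \le 2 r(A_{ii})$ for every $i$; taking the minimum over $i$ on both sides (evaluating the left side at the index that minimizes $r(A_{ii})$) yields $\min_i \mathrm{sep}(A_{i,i},-A_{i,i}^T) \le 2\min_i r(A_{ii})$. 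Chaining the two displays proves the claim.

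I do not expect any real obstacle: the corollary is a bookkeeping consequence of the block-diagonal decoupling proposition together with Proposition~\ref{prop:sep_bound_stab_radius}. The only points worth stating carefully are that the inequality flips the ``right'' way when one passes from the minimum over all pairs to the minimum over diagonal pairs (the minimum over the larger index set is the smaller number), and that in applying Proposition~\ref{prop:sep_bound_stab_radius} to $A_{ii}$ one uses it with $\mathrm{sep}(A_{ii},-A_{ii}^T)$, matching the form of the proposition exactly; since the block indices for the $\mathrm{sep}$ quantities and for the stability radii $r(A_{ii})$ range over the same set, selecting the common minimizing index is legitimate.
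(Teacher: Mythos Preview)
Your proposal is correct and matches the paper's intended (implicit) argument: the corollary is stated without proof there, and the obvious justification is precisely the combination of the block-diagonal decoupling proposition from Section~\ref{sec:disc_estim} with Proposition~\ref{prop:sep_bound_stab_radius} applied to each diagonal block, exactly as you outline. The only cautionary remark is terminological rather than mathematical: your restriction step and the subsequent application of Proposition~\ref{prop:sep_bound_stab_radius} are both sound, so nothing needs to change.
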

This could be interpreted as ``a system with robust subsystems yields a robust OVF''; however a formal proof would involve an estimate analogous to~\eqref{eq:HeEstimate}. We do not pursue this question here. Instead, we would like to characterize the sensitivity of the OVF in terms of the stability radius with respect to the coupling adapted norm~$\|\cdot\|_C$. Define
\[
r^C(A) := \min_{\delta A\in\C^{n\times n}}\left\{\cnorm{\delta A}\,\big\vert\, \exists z\in\sigma(A+\delta A), \Re z=0\right\},
\]
and note $\|A\|_2 \le k\cnorm{A}$. It follows $\text{sep}(A,-A^T)\le 2kr^C(A)$. If we restrict the class of allowed perturbations to the ones with zero diagonal blocks (i.e.\ ``coupling induced perturbations''), we immediately get the slight improvement $\text{sep}(A,-A^T)\le 2(k-1)r^C(A)$. With this bound, the analogous estimate to~\eqref{eq:strong_sensitivity_estim} would be
\begin{equation}
\frac{\|\delta P\|_C}{\|P\|_C} \le \underbrace{\frac{2\|\delta A\|_C}{\mathrm{sep}(A_{cl},-A_{cl}^T)}}_{\ge \frac{1}{k-1}},
\label{eq:strong_sensitivity_estim_coupl}
\end{equation}
which does not suggest the intuition of a strongly changing OVF under a weak destabilizing coupling provided $k$, the number of subsystems, is large. It would be desirable to find more sophisticated bounds of the separation in terms of the stability radius, in particular considering structured stability radii (adapted to perturbations induced by the coupling), in the sense of~\cite{KHP06}.

\begin{remark}
It should be noted that other characterizations of the OVF than~\eqref{eq:ric} exist, and hence other sensitivity estimates than~\eqref{eq:estimF} could be obtained; see~\cite{Meh91, KGMP03}.
\end{remark}

\paragraph{Examples.}

We consider two different types of examples. First, where the system matrix is strongly non-normal, so the pseudospectra~\cite{TrEm05} corresponding to small perturbations reach into the positive complex half plane. Second, where the control part of the cost (i.e.\ the matrix $R$) is much bigger than the state part of the cost (i.e.\ the matrix $Q$), hence the optimally controlled system is weakly stable in the sense that the spectral abscissa of the closed-loop system matrix is small (compared to the modulus of the corresponding eigenvalues).  In all examples we take $R$ to be the identity matrix. At the end of each example $\mathrm{sep}(A_{cl},-A_{cl}^T)$ and $r(A_{cl})$ are shown.

\begin{example}
The optimally controlled system has robust stable eigenvalues, i.e.\ the spectral abscissa is of order $\bigo(1)$. A weak coupling destabilizes the closed-loop system.
\[
A = \begin{pmatrix}
    1 & 1000 & 0 \\
    0 & -0.5 & 0 \\
    0 & 0 & -1
    \end{pmatrix}, \qquad
B = \begin{pmatrix}
		1 & 0\\
		0 & 0\\
		0 & 1
		\end{pmatrix}, \qquad
Q = I_{3\times 3}.
\]
Closed-loop eigenvalues: $-0.5,-\sqrt{2},-\sqrt{2}$. The coupling matrix
\[
\delta A = \begin{pmatrix}
		 0 & 0 & 0   \\
     0 & 0 & -0.1\\
     0.04 & 0 & 0
		 \end{pmatrix}
\]
moves the eigenvalues to $-1.6707 \pm 0.8163i, 0.0130$, and (with two digits precision)
\[
P = \begin{pmatrix}
		2.4 & 130 & 0 \\
		130 & 93000 & 0 \\
		0 & 0 & 0.41
\end{pmatrix}, \qquad
\delta P+P = \begin{pmatrix}
						 0.62 & 120 & -1.3 \\
						 120 & 33000 & -450 \\
						 -1.3 & -450 & 8.5
						 \end{pmatrix}
\]
$\mathrm{sep}(A_{cl},-A_{cl}^T) = 4.0\cdot 10^{-5}$. $r(A_{cl}) = 2.8\cdot 10^{-3}$.
\end{example}

\begin{example}
The optimally controlled system has robust stable eigenvalues. A weak coupling destabilizes the closed-loop system.
\[
A = \begin{pmatrix}
    1 & 50 & 0 & 0 & 0 \\
    0 & -0.5 & 50 & 0 & 0 \\
    0 & 0 & -0.5 & 50 & 0 \\
    0 & 0 & 0 & -0.5 & 0\\
    0 & 0 & 0 & 0 & 0.5
    \end{pmatrix}, \qquad
B = \begin{pmatrix}
		1 & 0\\
		0 & 0\\
		0 & 0\\
		0 & 0\\
		0 & 1
		\end{pmatrix}, \qquad
Q = I_{5\times 5}.
\]
The closed-loop eigenvalues are $-1.11, -1.41, -0.5, -0.5, -0.5$. The coupling matrix
\[
\delta A = \begin{pmatrix}
		 0 & 0 & 0 & 0 & 0 \\
		 0 & 0 & 0 & 0 & 0 \\
		 0 & 0 & 0 & 0 & 0 \\
		 0 & 0 & 0 & 0 & 0.1 \\
		 0.001 & 0 & 0 & 0 & 0
		 \end{pmatrix}
\]
moves the eigenvalues to $0.39 \pm 1.4i, -1.9 \pm 0.38i, -1.1$. $\mathrm{sep}(A_{cl},-A_{cl}^T) = 4.4\cdot 10^{-11}$. $r(A_{cl}) = 2.2\cdot 10^{-6}$.
\end{example}

\begin{example}
The optimally controlled system is only weakly stable due to an expensive control. A weak coupling destabilizes the closed-loop system
\[
A = \begin{pmatrix}
    10^{-2} & 1 & 0 & 0 \\
    -1 & 10^{-2} & 0 & 0 \\
    0 & 0 & 10^{-2} & 1 \\
    0 & 0 & - 1& 10^{-2}
    \end{pmatrix}, \qquad 
B = \begin{pmatrix}
		1 & 0\\
		0 & 0\\
		0 & 1\\
		0 & 0
		\end{pmatrix}, \qquad
Q = 10^{-4}I_{4\times 4}.
\]
The closed-loop eigenvalues are $-0.012 \pm 1i, -0.012 \pm 1i$. The coupling matrix
\[
\delta A = \begin{pmatrix}
		 0 & 0 & 5\cdot10^{-2} & 0 \\
		 0 & 0 & 0 & 0 \\
		 2\cdot10^{-2} & 0 & 0 & 0 \\
		 0 & 0 & 0 & 0  \\
		 \end{pmatrix}
\]
moves the eigenvalues to $-0.028 \pm 1i, 0.0036 \pm 1i$. The matrices of the associated optimal value functions are
\[
P = 10^{-2}\cdot\begin{pmatrix}
		 4.4 & -0.05 & 0 & 0 \\
		 -0.05 & 4.4 & 0 & 0 \\
		 0 & 0 & 4.4 & -0.05 \\
		 0 & 0 & -0.05 & 4.4
 	  \end{pmatrix},\qquad
P+\delta P = 10^{-2}\cdot\begin{pmatrix}
		 7.6 & -0.08 & 4.6 & -0.05 \\
		 -0.08 & 7.6 & -0.04 & 4.6 \\
		 4.6 & -0.04 & 3.8 & -0.04 \\
		 -0.05 & 4.6 & -0.04 & 3.8 
		 \end{pmatrix}.
\]
Clearly, the off-diagonal blocks of $P+\delta P$ are not small in comparison to the diagonal blocks. $\mathrm{sep}(A_{cl},-A_{cl}^T) = 0.025$. $r(A_{cl}) = 0.092$.
\end{example}

\subsection{Discrete time}

Just as before, we compute a bound for the $\sepd(\cdot,\cdot)$ term in~\eqref{eq:estimCdiscr}. Recall
\[
\sepd(M_1,M_2) = \min_{\|X\|_F=1}\|X-M_1^TXM_2\|_F.
\]
We would like to bound $\sepd(A,A)$ in terms of the stability radius of $A$, defined by
\[
r(A) := \min_{\delta A\in\C^{n\times n}} \left\{\|\delta A\|_2\,\big\vert\, \exists \lambda\in\C:\, |\lambda|=1,\,\lambda\in \sigma(A+\delta A)\right\}.
\]
We have that
\begin{proposition}	\label{prop:sepd_bound_stab_radius}
\[
\sepd(A,A) \le r(A)\left(2+r\left(A\right)\right).
\]
\end{proposition}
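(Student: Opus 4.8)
The plan is to mimic the (appendix) proof of Proposition~\ref{prop:sep_bound_stab_radius}: probe the Sylvester-type operator defining $\sepd$ with a rank-one matrix built from an eigenvector of the minimally destabilized matrix. First I would record the reformulation of $\sepd(A,A)$ that is convenient here. Since $\sepd(M_1,M_2)=\sigma_{\min}(I-M_1^T\otimes M_2^T)$ and the smallest singular value is invariant under conjugate transposition, for real $A$ one has $\sepd(A,A)=\sigma_{\min}(I-A^T\otimes A^T)=\sigma_{\min}(I-A\otimes A)=\sepd(A^T,A^T)$, that is,
\[
\sepd(A,A)=\min_{\|X\|_F=1}\|X-AXA^T\|_F .
\]
(Equivalently, one can avoid this step by working with a left eigenvector, i.e. an eigenvector of $(A+\delta A)^*$, directly in the form $\|X-A^TXA\|_F$; the bookkeeping is the same.)

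Next, let $\delta A\in\C^{n\times n}$ be a destabilizing perturbation of minimal norm, so $\|\delta A\|_2=r(A)$ and $(A+\delta A)v=\lambda v$ for some $v\in\C^n$ with $\|v\|_2=1$ and some $\lambda\in\C$ with $|\lambda|=1$ (if one prefers not to rely on attainment, take $\|\delta A\|_2\le r(A)+\ep$ and let $\ep\to 0$ at the end). Set $X:=vv^*$, which is Hermitian with $\|X\|_F=\|v\|_2^2=1$, hence an admissible test matrix. Writing $d:=\delta A\,v$, so that $Av=\lambda v-d$ and $\|d\|_2\le\|\delta A\|_2\|v\|_2=r(A)$, a direct computation using $|\lambda|=1$ gives
\[
AXA^T=(Av)(Av)^*=(\lambda v-d)(\lambda v-d)^*=vv^*-\lambda v d^*-\bar\lambda d v^*+dd^*,
\]
and therefore $X-AXA^T=\lambda v d^*+\bar\lambda d v^*-dd^*$.

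Finally I would estimate the Frobenius norm termwise, using $\|vd^*\|_F=\|v\|_2\|d\|_2$, $\|dv^*\|_F=\|d\|_2\|v\|_2$ and $\|dd^*\|_F=\|d\|_2^2$, to obtain
\[
\sepd(A,A)\le\|X-AXA^T\|_F\le 2\|d\|_2+\|d\|_2^2\le 2r(A)+r(A)^2=r(A)\bigl(2+r(A)\bigr),
\]
where in the last step we use that $t\mapsto 2t+t^2$ is increasing on $[0,\infty)$. The only points requiring care are the transpose bookkeeping — $A$ is real so $A^T=A^*$, whereas $\delta A$, $v$ and $\lambda$ are genuinely complex — and the justification of the reformulation in the first paragraph; there is no substantive obstacle. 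Note that the extra quadratic term $r(A)^2$, which has no counterpart in the continuous-time bound~\eqref{eq:estim_sep}, arises precisely from the cross term $dd^*$ produced by the multiplicative operator $X\mapsto AXA^T$, in contrast with the additive Lyapunov operator of the continuous-time case.
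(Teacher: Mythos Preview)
Your proof is correct and follows essentially the same route as the paper: both probe the Stein operator with the rank-one test matrix $X=vv^*$ built from a unit vector $v$ satisfying $\|(A-\lambda I)v\|_2=r(A)$ for some unimodular $\lambda$, expand the product, and bound the three resulting terms. The only cosmetic difference is that you first pass from $\|X-A^TXA\|_F$ to $\|X-AXA^T\|_F$ via transpose invariance of singular values before computing, whereas the paper works directly in the original form (and is correspondingly a bit looser with the transpose bookkeeping you flag).
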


Just as in the continuous-time case, this result, together with~\eqref{eq:estimCdiscr}, shows that if a weak coupling (i.e.\ with a small constant $\ep >0$) destabilizes a system, we have to take a strong coupling (i.e.\ with constant of size $\mathcal{O}(1)$) of the optimal value function into account.

\subsection{A discrete-time non-linear example}

The following system describes the temperature evolution of a fluid in two coupled reactors. It is a simplified version of~\cite{Stoe11}, and the constants may vary as well. The system consists of two one-dimensional subsystems:
\begin{equation}
\begin{aligned}
     \dot x_1 &= \frac{1}{0.0261} \left(2.133\cdot10^{-6}k_2\left(x_2-x_1\right) + q_c(u_1)(285.65-x_1) + 8.535\cdot 10^{-4}\right) \\
     \dot x_2 &= \frac{1}{0.0207} \left(1.795\cdot 10^{-5}(293.15-x_2) + 177.6\cdot 10^{-6} k_1 (x_1-x_2)+7.622\cdot10^{-5}u_2\right)
\end{aligned}
\label{eq:thermofluid}
\end{equation}
where the parameters $k_1=0.1$ and $k_2=0.2$ represent the coupling strength, $u_1,u_2\in[0,1]$ are control parameters, and
\[
q_c(u_1) = 1\cdot10^{-6}\left\{
\begin{array}{ll}
	83\cdot\left(1 - e^{\frac{-(u_1-0.15)}{0.1}}\right) & \text{for }u_1\ge0.15 \\
	0 & \text{other wise}.
\end{array}\right.
\]
The target set, where the system should be controlled to, is not a point, but a set, namely ${[295.03,299.71]\times[304.40,308.15]}$; and the cost functions are
\[
c_1(x_1,u_1) = 0.01\cdot\left|x_1 - 297.37\right| + (u_1-\bar u_1)^2,\qquad c_2(x_2,u_2) = 0.01\cdot\left|x_2 - 306.28\right| + (u_2-\bar u_2)^2,
\]
where $\bar u_{1,2}$ are chosen such that they hold the respective subsystem in the middle of the target (if no coupling is present). The state space is~$\X = [285.65,323.15]\times[293.15,323.15]$, and the control space is~$\U = [0,1]\times [0,1]$.

The time-$T$-map of~\eqref{eq:thermofluid} with $T=200$ will be the discrete-time system under consideration. We applied the method developed in~\cite{GrJu08a} to compute the OVF in the non-coupled, and coupled cases. Figure~\ref{fig:full2dcomputations} shows the results of the computation. Despite the weak coupling (the coupling constant is $\approx 0.1$), the OVF changes significantly: the relative difference of the coupled OVF to the non-coupled is more than~100\% in the~$L^2$~norm, and more than~80\% in the maximum norm. This is not surprising, since the coupling is chosen so that it de-stabilizes the optimally controlled non-coupled system. However, it is interesting to note that the best approximation of the coupled OVF in the $L^2$~norm by functions of the form $V_1(x_1)+V_2(x_2)$ yields a relative error of only 16\%! While the OVF changes significantly due to the weak coupling, it's approximability by non-coupled functions seems to be kept.
\begin{figure}[htb]
	\centering
		\includegraphics[width=0.4\textwidth]{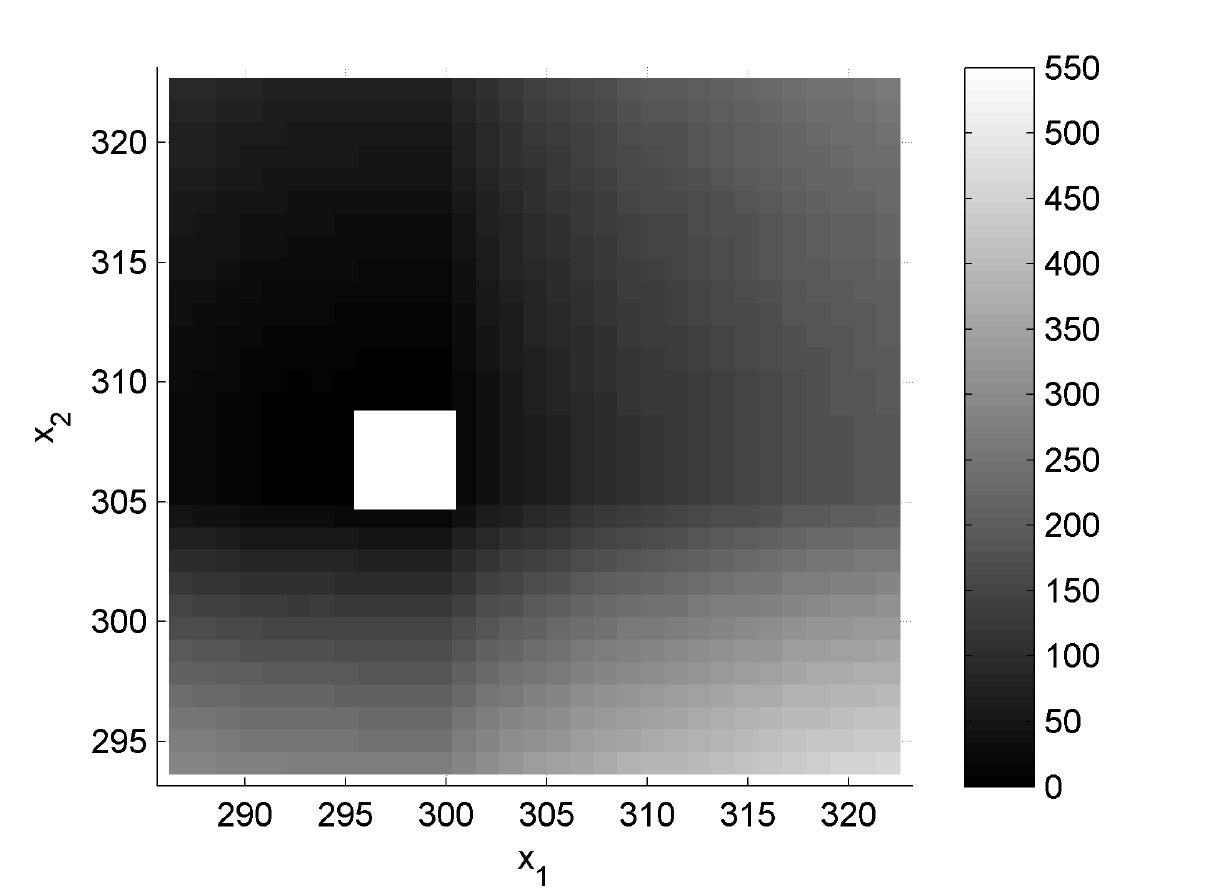} \quad
		\includegraphics[width=0.4\textwidth]{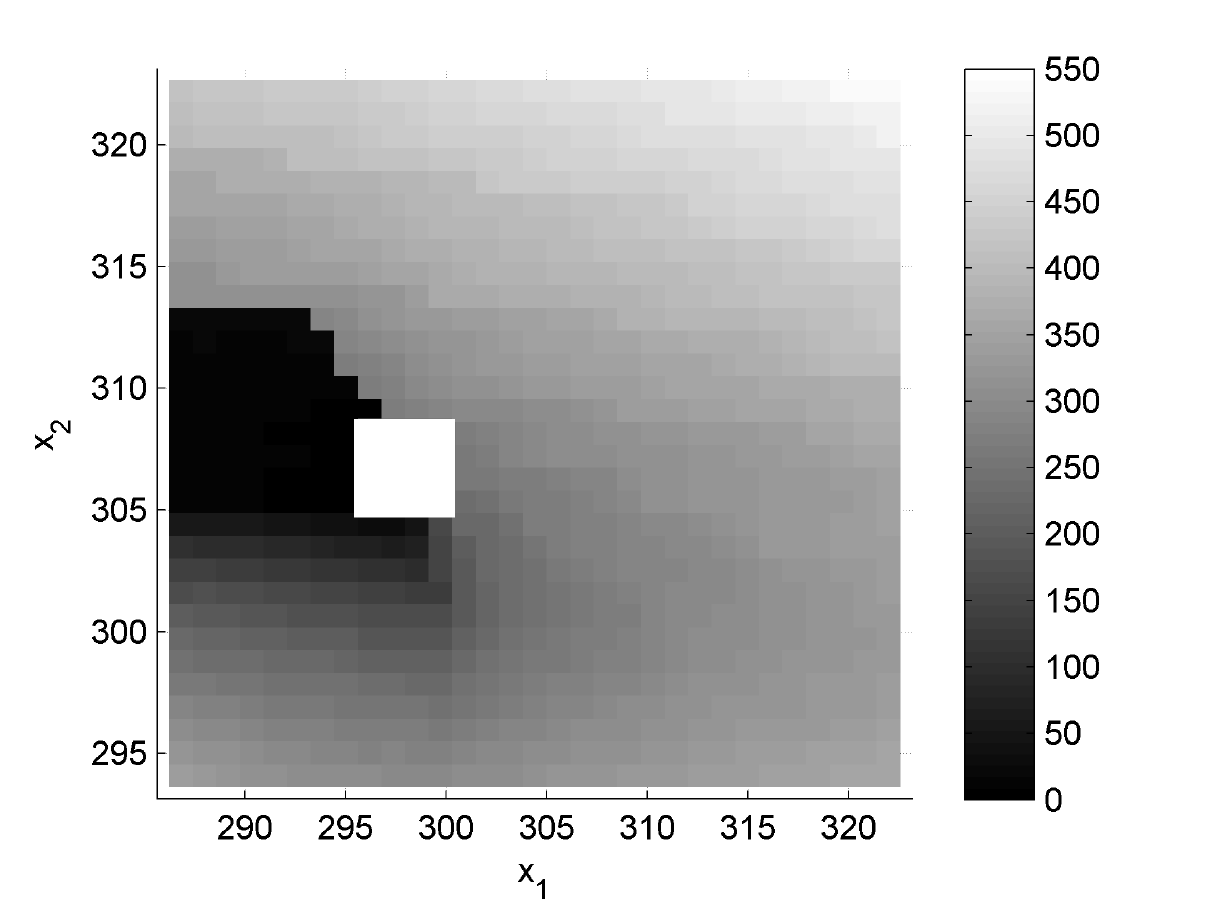}
	\caption{The OVF of the non-coupled (left) and the coupled (right) system. The white rectangle indicates the target set.}
	\label{fig:full2dcomputations}
\end{figure}

\section{Conclusions}

We have shown that even if one can divide an optimal control problem into subsystems which appear to be weakly coupled according to the structure of the Jacobian, the optimal value function still might not inherit this structure. We have also made an attempt to find a suitable norm for describing the sensitivity of the coupling strength of the optimal value function with respect to weak coupling in the system matrix. This ``coupling adapted'' norm turned out to be advantageous for upper bound estimates (cf.~Section~\ref{sec:disc_estim}), but does not seem to give a good intuition on the large sensitivity in the case of a large number of subsystems and a destabilizing coupling (cf.~Section~\ref{sec:optimal value functiondestab}). A better understanding is needed of how the coupling of the system matrix transfers into the coupling of the optimal value function. Certainly, more sophisticated ways of defining and detecting ``weak coupling'' are imaginable which would not show this undesired effect. Nevertheless, we have shown that the choice of coupling measure already has a significant effect on the obtained sensitivity bound.

On the way to designing (nearly) optimal controllers to weakly coupled systems a natural next question would be how does the optimal value function look like if additional robustness constrains (e.g.\ $r(A_{cl})$ has to be greater than a given threshold) have to be met by the closed loop system (if an optimal value function exists under such constraints at all); how it can be computed, and how does it compare to the optimal value function of the non-coupled system. Also, it might be possible to define cost functions such that the feedback for the associated uncoupled system still steers the weakly coupled system into some neighborhood of the target set.  These are questions for further investigations.

\section{Appendix}

\textbf{Proof of Proposition~\ref{prop:optimal value function_uncoupled}:}
\begin{proof}
``$\mathbf{\Rightarrow}$'': Trivial.\\
``$\mathbf{\Leftarrow}$'': In order to show the claim, we need following sublemma:\\
\textit{Sublemma}: If $V$ is $C^2$ in the $x_1$ and $x_2$ variables (let $z$ denote the other variables), and $\partial_{1,2}V(x_1,x_2,z)\equiv 0$, then $V(x_1,x_2,z) = V_1(x_1,z)+V_2(x_2,z)$, where $V_i$ is $C^2$ in the $x_i$ variable.\\
\textit{Proof of sublemma}: It holds that
\[
\partial_1V(x_1,x_2,z) -\partial_1 V(x_1,0,z) = \int_0^{x_2} \partial_{1,2}V(x_1,s,z)\,ds = 0.
\]
Again by the fundamental theorem of calculus we have
\[
V(x_1,x_2,z)-V(0,x_2,z) - V(x_1,0,z) + V(0,0,z) = 0,
\]
i.e.\ $V(x_1,x_2,z) = V_1(x_1,z) + V_2(x_2,z)$. From $V_1 = V-V_2$, and $\partial_1 V_2\equiv 0$, the rest of the claim follows immediately.

The proof of the proposition follows by induction. For $k=2$ the sublemma implies the statement. For the induction step, assume that \linebreak[4] ${V(x) = \sum_{i=1}^\ell V_i(x_i,z)}$, where $z=(x_{\ell+1},\ldots,x_k)$ and the $V_i$ are all $C^2$. It follows for all $i$
\[
0 = \partial_{i,\ell+1} V(x) = \partial_{i,\ell+1} V_i(x_i,x_{\ell+1},z'),
\]
where $z' = (x_{\ell+2},\ldots,x_k)$. The sublemma implies
\[
V_i(x_i,x_{\ell+1},z') = V_i^{(1)}(x_i,z') + V_i^{(2)}(x_{\ell+1},z'),
\]
$V_i^{(1)}$ and $V_i^{(2)}$ being $C^2$ in $x_i$ and $x_{\ell+1}$, respectively. Hence we have $V(x) = \sum_{i=1}^{\ell+1}W_i(x_i,z')$, where
\begin{eqnarray*}
W_i(x_i,z') & = & V_i^{(1)}(x_i,z'),\quad i=1,\ldots,\ell, \\
W_{\ell+1}(x_{\ell+1},z') & = & \sum_{j=1}^{\ell} V_j^{(2)}(x_{\ell+1},z').
\end{eqnarray*}
This completes the proof.
\end{proof}

\noindent \textbf{Proof of Proposition~\ref{prop:sep_bound_stab_radius}:}
\begin{proof}
We make use of the notion of \textit{pseudospectra}~\cite{TrEm05}. The set
\[
\sigma_{\ep}(A) := \left\{z \in\C\,\big\vert\,\exists \delta A\in\C^{n\times n}:\|\delta A\|_2\le\ep, z \in\sigma(A+\delta A)\right\}
\]
is the (complex) $\ep$-pseudospectrum, where $\sigma(\cdot)$ denotes the usual spectrum.

From Theorem~3.1~\cite{Vara79} we have
\begin{eqnarray*}
\textrm{sep}(A,-A^T) & \le & \textrm{sep}_{\lambda}(A,-A^T) := \min_{\ep_1,\ep_2}\left\{\ep_1+\ep_2\,\big\vert\,\sigma_{\ep_1}(A)\cap\sigma_{\ep_2}(-A^T)\neq\emptyset\right\} \\
									 & \le & \min_{\ep_1,\ep_2}\left\{\ep_1+\ep_2\,\big\vert\,\exists z\in i\R, z\in\sigma_{\ep_1}(A), z\in\sigma_{\ep_2}(-A^T)\right\} \\
									 & = & 2r(A).
\end{eqnarray*}
The inequality in the second line comes from restricting the minimization to the imaginary axis; the equation in the third line comes from the fact that $i\omega\in\sigma(A+\delta A)$ for some $\omega\in\R$ implies $i\omega\in\sigma(-A^T-\delta A^*)$, where $A^*$ denotes the conjugate transpose of~$A$; and that $\|A\|_2 = \|A^*\|_2$.
\end{proof}

\noindent \textbf{Proof of Proposition~\ref{prop:sepd_bound_stab_radius}:}
\begin{proof}
Assume we have $\lambda\in\C$, $|\lambda|=1$, non-negative numbers $\ep_{1,2}$, vectors $\|v_{1,2}\|_2=1$ and $\|w_{1,2}\|_2=\ep_{1,2}$ such that for the real matrices $M_{1,2}$ holds
\[
(M_i^T-\lambda I)v_i = w_i,\quad i=1,2.
\]
It follows ($x^*$ denotes the conjugate transpose of $x$)
\begin{eqnarray*}
w_1w_2^* & = & (M_1^T-\lambda I)v_1v_2^*(M_2-\bar\lambda  I) \\
				 & = & M_1^Tv_1v_2^*M_2 - \lambda v_1v_2^*M_2 - \bar\lambda M_1^Tv_1v_2^* + v_1v_2^* \\
				 & = & M_1^Tv_1v_2^*M_2 - \lambda v_1(\bar\lambda v_2^*+w_2^*) - \bar\lambda (\lambda v_1+w_1)v_2^* + v_1v_2^* \\
				 & = & M_1^Tv_1v_2^*M_2 - v_1v_2^* - \lambda v_1w_2^* - \bar\lambda w_1 v_2^*.
\end{eqnarray*}
With this equation and by setting $X = v_1v_2^*$ we conclude that $\sepd(M_1,M_2)\le\ep_1\ep_2+\ep_1+\ep_2$.

Since a $\lambda\in\C$ is in the (complex) $\ep$-pseudospectrum of $A$ if there is a vector $v$ of unit modulus such that ${\|(A-\lambda I)v\|_2=\ep}$ (see~\cite{TrEm05}), we can find a $\|v\|_2=1$ such that $\|(A-\lambda I)v\|_2=r(A)$.

Setting $v_1=v_2=v$ and $M_{1,2}=A$ in the above computation yields the claim.
\end{proof}


\bibliographystyle{alpha}
\bibliography{References}

\end{document}